\ifpdf\usepackage{pst-pdf}\else\fi
\let\oldautoref\autoref
\renewcommand\autoref[1]{\@first@ref#1,@}
\def\@throw@dot#1.#2@{#1}
\def\@set@refname#1{
    \edef\@tmp{\getrefbykeydefault{#1}{anchor}{}}%
    \def\@refname{\@nameuse{\expandafter\@throw@dot\@tmp.@autorefname}s}%
}
\def\@first@ref#1,#2{%
  \ifx#2@\oldautoref{#1}\let\@secondref\@gobble
  \else%
    \@set@refname{#1}
    \@refname~\ref{#1}
    \let\@secondref\@second@ref
  \fi%
  \@secondref#2%
}
\def\@second@ref#1,#2{%
  \ifx#2@ and~\ref{#1}\let\@nextref\@gobble
  \else, \ref{#1}
    \let\@nextref\@next@ref
  \fi%
  \@nextref#2%
}
\def\@next@ref#1,#2{%
   \ifx#2@, and~\ref{#1}\let\@nextref\@gobble
   \else, \ref{#1}
   \fi%
   \@nextref#2%
}
\let\oldtheequation\theequation
\def\tagform@#1{\maketag@@@{\ignorespaces#1\unskip\@@italiccorr}}
\renewcommand{\theequation}{(\oldtheequation)}
\def\ifdraft{\ifdim\overfullrule>\z@
  \expandafter\@firstoftwo\else\expandafter\@secondoftwo\fi}
\theoremstyle{plain}
\newtheorem{theorem}{Theorem}
\newtheorem{lemma}{Lemma}
\theoremstyle{definition}
\newcommand{\bN}{\mathbb N}
\newcommand{\bR}{\mathbb R}
\DeclareMathOperator{\arctanh}{arctanh}
\DeclareMathOperator{\Diff}{Diff}
\DeclareMathOperator{\dist}{dist}
\DeclareMathOperator{\Land}{Land}
\title[Approximation of Riemannian Distances]{Approximation of Riemannian Distances and Applications to Distance-Based Learning on Manifolds}
\author{Philipp Harms}
\address{Philipp Harms, Albert Ludwig University of Freiburg}
\email{philipp.harms@stochastik.uni-freiburg.de}
\author{Elodie Maignant}
\address{Elodie Maignant, Ecole Normale Sup\'erieure Paris-Saclay}
\email{emaignant@ens-paris-saclay.fr}
\author{Stefan Schlager}
\address{Stefan Schlager, Albert Ludwig University of Freiburg}
\email{stefan.schlager@anthropologie.uni-freiburg.de}
\thanks{We thank Peter W.~Michor, Xavier Pennec, and Stefan Sommer for helpful discussions and gratefully acknowledge support by FRIAS and Penn State University via the Freiburg--Penn State Collaboration Development Program.}
\subjclass[2010]{%
Primary 53B20; 
Secondary 54C56
}
\begin{document}
\begin{abstract}
Several important algorithms for machine learning and data analysis use pairwise distances as input. On Riemannian manifolds these distances may be prohibitively costly to compute, in particular for large datasets. To tackle this problem, we propose a distance approximation which requires only a linear number of geodesic boundary value problems to be solved. The approximation is constructed by fitting a two-dimensional model space with constant curvature to each pair of samples. We demonstrate the usefulness of our approach in the context of shape analysis on landmarks spaces.  
\end{abstract}
\maketitle
\tableofcontents

\section{Introduction}

\subsection{Context}

Several important algorithms for machine learning and data analysis on manifolds use pairwise distances as an input. 
For example, this is the case for multi-dimensional scaling and agglomerative clustering. 
The main computational burden is typically not the algorithm per se, but the calculation of the pairwise distance matrix. 
Indeed, each individual distance might be the solution of a costly optimization problem, as e.g.\@ in the case of Riemannian shape analysis, and the size of the distance matrix grows quadratically in the number of samples. 
This can render distance-based learning prohibitively slow compared to alternative non-geometric methods.

\subsection{Relation to previous work}

This paper builds on the idea of \textcite{yang2011approximations} to use distance approximations for reducing the number of boundary value problems from quadratic down to linear in the number of samples.
The approximations in \cite{yang2011approximations} are based on the Baker--Campbell--Hausdorff formula and are therefore restricted to (quotients of) Lie groups with bi-invariant Riemannian metrics. 
In particular, they do not apply to landmark spaces with kernel metrics. 
Indeed, the numerical experiments in \cite{yang2011approximations} do not show any improvement of second over first order approximations.
This leads \textcite{yang2015diffeomorphic} to question if such improvements are possible at all. 
The present paper answers this question affirmatively. 

\subsection{Contribution}

We propose a new second order approximation of the distance function. 
The approximation is obtained by fitting a two-dimensional model space with constant curvature to each pair of samples, represented by tangent vectors to the mean. 
The reduction to two dimensions is crucial because higher-dimensional model spaces with prescribed sectional curvature and closed-form expressions for geodesic distances are not available, to the best of our knowledge. 
As expected, the second-order approximation is more accurate than the first-order one in small-distance regimes, which cover typical applications in shape analysis. 
Moreover, in contrast to second-order Taylor polynomials, our approximate distances are always non-negative and have more realistic large-distance asymptotics. 
As an application, we present a numerical implementation of our distance approximation on landmark manifolds with kernel metrics and demonstrate its performance on some simple toy data.

\subsection{Outlook}

In the special case of landmark manifolds with kernel metrics, our sectional curvature computations could be sped up using Mario's formula \cite{micheli2012sectional}.
This will allow us to test our algorithm on high-dimensional data from real-world applications. 
Pseudo-landmark data will be of particular interest because the small distances of adjacent pseudo-landmarks lead to large sectional curvatures, which highlights the importance of the curvature correction in the second-order distance approximations. 

\section{Approximations of pairwise distances}
\label{sec:approximations}

This aim of this section is to construct approximations of the Riemannian distance function, which are fast to compute and accurate for sufficiently concentrated sample points. 
As an auxiliary tool we first describe Taylor expansions of geodesic distances. 
Subsequently, we develop an alternative and better approximation using model spaces with constant curvature.  

\subsection{Taylor approximation of squared distances}
\label{sec:taylor}

Recall that the squared Riemannian distance function on any Riemannian manifold is smooth away from the cut locus, including on a neighborhood of the diagonal. 
This allows one to use Taylor approximations of squared distances between sufficiently close points. 
More specifically, for any two points $y,z$ near $x$, one may calculate the Riemannian logarithms $u=\log_x(y)$ and $v=\log_x(z)$ and approximate the squared distance $\dist(y,z)^2$ by a Taylor polynomial of the function $(u,v) \mapsto \dist(\exp_x(u),\exp_x(v))^2$.
This is made precise in the following lemma, which also provides an explicit expression of the Taylor series up to order five.

\begin{lemma}\label{lem:taylor}
Let $(M,g)$ be a Riemannian manifold with exponential map $\exp$,  distance function $\dist$, and curvature tensor $R$, let $x \in M$, and let $u,v \in T_xM$. 
Then it holds for sufficiently small $u$ and $v$ that
\begin{equation}\label{eq:taylor}
\dist\big(\exp_{x}(u),\exp_{x}(v)\big)^2
=
\|u-v\|_{g_x}^2-\frac{1}{3}R_x(u,v,v,u)+O(\|u\|+\|v\|)^6.
\end{equation}
\end{lemma}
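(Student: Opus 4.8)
The plan is to transport the whole computation into normal coordinates centred at $x$ and to reduce the squared distance to an elementary integral. First I would fix a $g_x$-orthonormal basis of $T_xM$ and pass to the associated normal coordinates: in this chart the points $\exp_x(u)$ and $\exp_x(v)$ are represented by the coordinate tuples $u$ and $v$ themselves, the Christoffel symbols vanish at the origin, and the metric admits its classical Taylor expansion $g_{ij}(w)=\delta_{ij}-\tfrac13 R_{ikjl}(x)\,w^k w^l+O(\|w\|^3)$. Since (as recalled above) the squared distance is smooth near the diagonal, for $\|u\|$ and $\|v\|$ small the unique minimising geodesic $c\colon[0,1]\to M$ between the two points stays inside the chart and realises $\dist(\exp_x u,\exp_x v)^2$ as its energy $\int_0^1 g_{ij}(c)\,\dot c^i\dot c^j\,d\tau$. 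The key idea is then to compare $c$ with the Euclidean segment $\ell(\tau)=(1-\tau)u+\tau v$ joining the same endpoints.

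Write $r=\|u\|+\|v\|$. Because the Christoffel symbols are $O(r)$ on the region swept out by these curves, the covariant acceleration of $\ell$ is $O(r^3)$, so the deviation $c-\ell$ and its derivative, which vanish at the endpoints, are $O(r^3)$; by the first variation formula for the energy the boundary terms drop out and I would obtain $\dist(\exp_x u,\exp_x v)^2=\int_0^1 g_{ij}(\ell)\,\dot\ell^i\dot\ell^j\,d\tau+O(r^6)$. It then remains to expand this integral. Substituting $\dot\ell=v-u$ and the Taylor expansion of $g$, the third-order contribution drops because the first derivatives of $g$ vanish at the origin, the second-order contribution is $\|u-v\|_{g_x}^2$, and the fourth-order contribution is $-\tfrac13 R_{ikjl}(x)(v-u)^i(v-u)^j\int_0^1\ell^k\ell^l\,d\tau$. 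Using $\int_0^1\ell^k\ell^l\,d\tau=\tfrac13 u^ku^l+\tfrac13 v^kv^l+\tfrac16(u^kv^l+v^ku^l)$ together with the symmetries of $R$ — antisymmetry in each index pair annihilates every monomial with a repeated $u$ or a repeated $v$ inside a pair, and the pair symmetry $R(a,b,c,d)=R(c,d,a,b)$ identifies the three surviving contractions — this fourth-order term collapses to a single multiple of $R_x(u,v,v,u)$ with total coefficient $-\tfrac13$, giving \eqref{eq:taylor}.

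The one genuinely non-routine step is the estimate $\dist(\exp_x u,\exp_x v)^2=\int_0^1 g_{ij}(\ell)\,\dot\ell^i\dot\ell^j\,d\tau+O(r^6)$, that is, quantifying how far the true minimising geodesic can stray from the Euclidean segment in the chart; I expect this to be the main obstacle, and I would handle it via the first and second variation of energy as above, using the uniform $O(r)$ bound on the Christoffel symbols and the non-degeneracy of the energy Hessian along short segments. An alternative route, which sidesteps this estimate, is to exploit directly the smoothness of $(u,v)\mapsto\dist(\exp_x u,\exp_x v)^2$ near $(0,0)$: every homogeneous part of its Taylor polynomial is a natural invariant of $(u,v)$ assembled from $g_x$ and the curvature and its covariant derivatives, so a weight count combined with the symmetries of $R$ forces the second-order part to be a scalar multiple of $\|u-v\|_{g_x}^2$ and the fourth-order part a scalar multiple of $R_x(u,v,v,u)$, the constants being pinned down by testing on $\bR^n$ and on the round sphere.
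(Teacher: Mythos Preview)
The paper does not actually prove this lemma; it merely records the expansion as well known and points to \cite{nicolaescu2011jets} and \cite{pennec2018taylor}. Your proposal is therefore far more substantive than the paper's treatment, and the strategy you outline --- pass to normal coordinates, replace the minimising geodesic by the Euclidean chord $\ell$, control the discrepancy via the first variation of energy, then expand $g_{ij}(\ell)$ --- is a standard and sound route to such expansions. In particular your handling of the step you flag as ``non-routine'', namely $E[c]=E[\ell]+O(r^6)$, is correct: $\ell-c=O(r^3)$ in $C^1$, the first variation of $E$ at the geodesic $c$ vanishes on endpoint-fixing variations, and the quadratic remainder is then $O(r^6)$.

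There is, however, a genuine gap in reaching the stated remainder $O(\|u\|+\|v\|)^6$. Your Taylor expansion of the metric stops at the quadratic term, $g_{ij}(w)=\delta_{ij}-\tfrac13 R_{ikjl}w^kw^l+O(\|w\|^3)$, and that $O(\|w\|^3)$ tail, multiplied by $(v-u)^i(v-u)^j=O(r^2)$ and integrated, yields an $O(r^5)$ contribution to $E[\ell]$ that you never discuss. Concretely, the cubic piece of $g$ in normal coordinates is $-\tfrac16\nabla_m R_{ikjl}\,w^kw^lw^m$, which feeds a fifth-order term built from $\nabla R$ into $E[\ell]$; the antisymmetries of $R$ that you exploit at order four do not annihilate it, and your alternative ``natural invariant'' route likewise stops at order four. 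As written, then, your sketch establishes \eqref{eq:taylor} only with remainder $O(r^5)$. To obtain the $O(r^6)$ claimed in the lemma you must carry the metric expansion one order further and deal with this $\nabla R$ contribution explicitly --- or, as the paper does, defer to the cited references.
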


\begin{proof}
This expansion is well-known even for higher orders; see e.g.\@ \cite{nicolaescu2011jets} or \cite{pennec2018taylor}.
\end{proof}

Note that the lowest-order term in the Taylor series \eqref{eq:taylor} is the Euclidean distance in normal coordinates.
The next term corrects for the influence of curvature and, by definition, improves the accuracy for small distances. 
However, for large distances, it may worsen the accuracy, can lead to negative signs, and has unrealistic quartic growth; cf.\@ \autoref{fig:curvature}.
These problems are even worse for higher order Taylor expansions, which involve covariant derivatives of the curvature tensor. 
To address these problems, we propose an alternative approximation in the next section.

\begin{figure}
\centering
\begin{psfrags}%
\psfrag{a0}[tc][tc]{$0$}%
\psfrag{a1}[tc][tc]{$\pi$}%
\psfrag{b0}[cr][cr]{$0$}%
\psfrag{b1}[cr][cr]{$\pi$}%
\psfrag{d}[cc][cc]{distance}%
\psfrag{kkkkkkkkk1}[Bl][cl]{$k=1$}%
\psfrag{kkkkkkkkk2}[Bl][cl]{$k=\tfrac12$}%
\psfrag{kkkkkkkkk3}[Bl][cl]{$k=0$}%
\psfrag{kkkkkkkkk4}[Bl][cl]{$k=-\tfrac12$}%
\psfrag{kkkkkkkkk5}[Bl][cl]{$k=-1$}%
\psfrag{t}[Bc][tc]{$t$}%
\includegraphics[width=\textwidth]{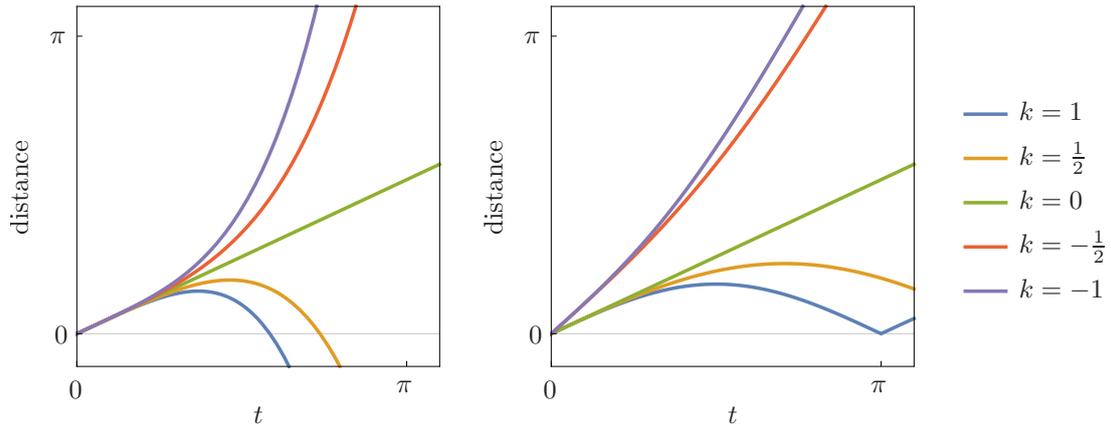}%
\end{psfrags}
\caption{Approximate distances between points $\exp_x(tu)$ and $\exp_x(tv)$ on a manifold with sectional curvature at $x$ given by $k_x(u,v) \in \{-1,-\tfrac12,0,\tfrac12,1\}$. Here $u$ and $v$ are unit-length vectors of angle $\phi=\pi/6$ in the tangent space at $x$. Left: The 2nd order Taylor approximation of the distance function (cf.\@ \autoref{lem:taylor}) has quadratic growth and may become negative. Right: The Riemannian distance on a two-dimensional model space with matching constant sectional curvature (cf.\@ \autoref{thm:curvature}) remains positive, is periodic in the case of positive curvature, and has asymptotically linear growth (cf.\@ \autoref{lem:growth}).}
\label{fig:curvature}
\end{figure}	

\subsection{Approximation by constant curvature spaces}
\label{sec:curvature}

This section corrects some shortcomings of the Taylor approximation in the previous section.
The idea is to replace the Taylor polynomial \eqref{eq:taylor} by the squared distance function of a suitably selected constant curvature space.
This guarantees non-negativity and leads to more realistic large-distance asymptotics while retaining the same order of accuracy. 
A visual comparison of the two approximations is presented in \autoref{fig:curvature}.

In more details, the proposed approximation works as follows.
For any points $y,z$ near $x \in M$, calculate the Riemannian logarithms $u=\log_x(y)$ and $v=\log_x(z)$ and sectional curvature $k_x(u,v)$, find a surface with constant curvature equal to $k_x(u,v)$, and approximate the squared distance between $y$ and $z$ by the squared distance function of the constant curvature surface.
This is made precise in the following theorem.

\begin{theorem}\label{thm:curvature}
Let $(M,g)$ be a Riemannian manifold with Riemannian distance function $\dist$ and sectional curvature $k$,
let $x \in M$, 
let $u,v\in T_xM$, 
and let $(\tilde M,\tilde g)$ be a 2-dimensional constant curvature space with $\tilde x \in \tilde M$ and $\tilde u,\tilde v \in T_{\tilde x}\tilde M$ such that
\begin{align*}
g_x(u,u)=\tilde g_{\tilde x}(\tilde u,\tilde u),
\quad
g_x(v,v)=\tilde g_{\tilde x}(\tilde v,\tilde v),
\quad
g_x(u,v)=\tilde g_{\tilde x}(\tilde u,\tilde v),
\quad
k_x(u,v)=\tilde k_{\tilde x}(\tilde u,\tilde v).
\end{align*}
Then it holds for sufficiently small $u$ and $v$ that
\begin{equation}\label{eq:curvature}
\dist\big(\exp_{x}(u),\exp_{x}(v)\big)^2
=
\widetilde\dist\big(\exp_{\tilde x}(\tilde u),\exp_{\tilde x}(\tilde v)\big)^2+O(\|u\|+\|v\|)^6.
\end{equation}
\end{theorem}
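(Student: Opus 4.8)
The natural strategy is to reduce \autoref{thm:curvature} to \autoref{lem:taylor} applied on both manifolds. The point is that the right-hand side of \eqref{eq:taylor} depends only on the inner products $g_x(u,u)$, $g_x(v,v)$, $g_x(u,v)$ and on the single curvature number $R_x(u,v,v,u)$. Indeed, the leading term $\|u-v\|_{g_x}^2 = g_x(u,u) - 2g_x(u,v) + g_x(v,v)$ is manifestly a function of the three inner products, and the sectional curvature is $k_x(u,v) = R_x(u,v,v,u)/\bigl(g_x(u,u)g_x(v,v) - g_x(u,v)^2\bigr)$, so that $R_x(u,v,v,u) = k_x(u,v)\bigl(g_x(u,u)g_x(v,v) - g_x(u,v)^2\bigr)$ is likewise determined by those four scalars. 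The hypotheses of the theorem assert precisely that all four scalars agree for $(u,v)$ on $M$ and $(\tilde u,\tilde v)$ on $\tilde M$.

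The plan is therefore as follows. First I would apply \autoref{lem:taylor} on $(M,g)$ at $x$ to expand $\dist(\exp_x u,\exp_x v)^2$, and separately on $(\tilde M,\tilde g)$ at $\tilde x$ to expand $\widetilde\dist(\exp_{\tilde x}\tilde u,\exp_{\tilde x}\tilde v)^2$; both expansions are valid for sufficiently small arguments. Second, I would rewrite the order-$4$ term $-\tfrac13 R_x(u,v,v,u)$ in each expansion using the identity $R_x(u,v,v,u) = k_x(u,v)\bigl(g_x(u,u)g_x(v,v) - g_x(u,v)^2\bigr)$, and similarly on $\tilde M$. Third, I would substitute the four scalar matching conditions from the hypothesis to conclude that the two Taylor polynomials, up to and including order $5$, coincide term by term. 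Subtracting the two expansions, the polynomial parts cancel and only the two remainders survive, each of which is $O(\|u\|+\|v\|)^6 = O(\|\tilde u\|+\|\tilde v\|)^6$ since the matching of inner products forces $\|\tilde u\|_{\tilde g_{\tilde x}} = \|u\|_{g_x}$ and $\|\tilde v\|_{\tilde g_{\tilde x}} = \|v\|_{g_x}$. This yields \eqref{eq:curvature}.

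There is essentially no analytic obstacle here: the content is bookkeeping, and the one substantive input — that the squared-distance jet up to order $5$ is a universal function of the metric and curvature scalars at the base point — is exactly what \autoref{lem:taylor} provides. The only point requiring a word of care is \emph{existence} of the model configuration: one must note that for any prescribed Gram matrix $\bigl(\begin{smallmatrix} a & c \\ c & b\end{smallmatrix}\bigr)$ with $a,b>0$ and $ab-c^2\ge 0$ and any real curvature value $\kappa$, the simply connected $2$-dimensional space form $\tilde M$ of curvature $\kappa$ (the sphere $\bS^2$ rescaled if $\kappa>0$, the plane $\bR^2$ if $\kappa=0$, the hyperbolic plane rescaled if $\kappa<0$) admits a point $\tilde x$ and vectors $\tilde u,\tilde v\in T_{\tilde x}\tilde M$ realizing that Gram matrix, simply because $T_{\tilde x}\tilde M$ is a $2$-dimensional inner product space. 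If $\tilde u,\tilde v$ are linearly dependent the sectional curvature condition is vacuous and the claim still holds; otherwise $ab-c^2>0$ and the curvature is genuinely matched. Thus the theorem's hypothesis is never empty, and the argument above applies verbatim.
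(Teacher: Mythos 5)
Your proposal is correct and follows the same route as the paper: reduce to \autoref{lem:taylor} on both manifolds and note that the matching of the three inner products and the sectional curvature forces $R_x(u,v,v,u)=\tilde R_{\tilde x}(\tilde u,\tilde v,\tilde v,\tilde u)$, so the two expansions agree through order $5$. Your additional remark on the existence of the model configuration $(\tilde x,\tilde u,\tilde v)$ is a worthwhile point of care that the paper leaves implicit.
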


\begin{proof}
The conditions ensure that $R_x(u,v,v,u)=\tilde R_{\tilde x}(\tilde u,\tilde v,\tilde v,\tilde u)$. 
Therefore, the functions 
\begin{align*}
(u,v)&\mapsto \dist\big(\exp_{x}(u),\exp_{x}(v)\big)^2, 
& 
(\tilde u,\tilde v) &\mapsto \widetilde\dist\big(\exp_{\tilde x}(\tilde u),\exp_{\tilde x}(\tilde v)\big)^2
\end{align*}
have the same Taylor expansion up to order 5 by \autoref{lem:taylor}.
\end{proof}

The approximation in \autoref{thm:curvature} can be computed efficiently thanks to the following closed-form expression of the geodesic distance on constant curvature surfaces (i.e., the sphere, plane, and hyperbolic plane).

\begin{lemma}
\label{lem:const_curv}
Let $(M, g)$ be a 2-dimensional manifold with constant sectional curvature $k\in\bR$, 
let $r=\mathbbm 1_{\{k\neq 0\}} |k|^{-1/2}$, 
let $x \in M$, 
let $\langle\cdot,\cdot\rangle$ and $\|\cdot\|$ denote to the Riemannian metric and norm on $T_xM$, respectively, 
let $u, v \in T_xM$, 
and let $\phi \in \bR$ satisfy $\langle u,v\rangle=\|u\|\|v\|\cos\phi$. Then
\begin{align*}
\hspace{2em}&\hspace{-2em}
\dist\big(\exp_x(u),\exp_x(v)\big)
\\&=
\left\{\begin{aligned}
&r\arccos\big(\cos\|u/r\|\cos\|v/r\|+\sin\|u/r\|\sin\|v/r\|\cos\phi\big),&&k>0,\\
&\|u-v\|,&& k=0,\\
&2r \arctanh\sqrt{1-\frac{2}{1+\cosh\|u/r\|\cosh\|v/r\|-\sinh\|u/r\|\sinh\|v/r\|\cos\phi}},&&k<0, 
\end{aligned}\right.
\end{align*}
\end{lemma}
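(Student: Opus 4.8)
\emph{The plan} is to reduce the statement to the three simply connected model spaces and then carry out an explicit computation — essentially, to prove the classical laws of cosines of spherical, Euclidean, and hyperbolic geometry. Since $(M,g)$ has constant curvature $k$, a geodesic ball around $x$ is isometric to a geodesic ball in the simply connected space form of curvature $k$: the round sphere $\bS^2_r$ of radius $r=|k|^{-1/2}$ when $k>0$, the Euclidean plane when $k=0$, and the hyperbolic plane of curvature $k$ when $k<0$. Under such an isometry the points $\exp_x(u)$ and $\exp_x(v)$ go to the endpoints of unit-speed geodesics issuing from the base point with lengths $\|u\|$ and $\|v\|$ and initial directions enclosing the angle $\phi$ (the last point because $\langle u,v\rangle=\|u\|\|v\|\cos\phi$ is preserved by the isometry). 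Hence it suffices to compute, on each model space, the distance between two points given in geodesic polar coordinates.

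For $k=0$ this is immediate, as $\exp_x$ is then a linear isometry and the distance equals $\|u-v\|$. For $k>0$ I would realize $\bS^2_r$ as the round sphere in $\bR^3$ with base point at a pole; then the point at geodesic distance $\rho$ and polar angle $\theta$ is $(r\sin(\rho/r)\cos\theta,\ r\sin(\rho/r)\sin\theta,\ r\cos(\rho/r))$, and for $P,Q\in\bS^2_r$ one has $\dist(P,Q)=r\arccos(\langle P,Q\rangle_{\bR^3}/r^2)$. Writing $\exp_x(u)$ and $\exp_x(v)$ in this form with polar angles $\theta_1,\theta_2$, expanding $\langle P,Q\rangle_{\bR^3}$, and substituting $\cos(\theta_1-\theta_2)=\cos\phi$ produces exactly the stated formula.

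The case $k<0$ is handled the same way with the hyperboloid model $\{\,x\in\bR^{2,1}:\langle x,x\rangle_{2,1}=-r^2,\ x_3>0\,\}$ in Minkowski space, in which the point at distance $\rho$ and angle $\theta$ is $(r\sinh(\rho/r)\cos\theta,\ r\sinh(\rho/r)\sin\theta,\ r\cosh(\rho/r))$ and $\dist(P,Q)=r\operatorname{arccosh}(-\langle P,Q\rangle_{2,1}/r^2)$. The analogous computation gives the hyperbolic law of cosines in the form $\dist=r\operatorname{arccosh}(C)$ with $C=\cosh\|u/r\|\cosh\|v/r\|-\sinh\|u/r\|\sinh\|v/r\|\cos\phi$. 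It then remains to match this to the expression in the lemma: setting $C=\cosh d$, the half-argument identity $\tanh^2(d/2)=(\cosh d-1)/(\cosh d+1)=1-2/(1+C)$ gives $\sqrt{1-2/(1+C)}=\tanh(d/2)$, hence $2r\arctanh\sqrt{1-2/(1+C)}=rd=r\operatorname{arccosh}(C)$, which is the claimed value.

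The only step requiring genuine care is the first one: justifying the reduction to the model spaces and that the closed forms really compute the Riemannian distance rather than a purely local quantity. For $k\le 0$ this causes no trouble, since $\exp$ is a global diffeomorphism on the simply connected models (plane and hyperbolic plane) and the formulas are globally valid there; for $k>0$ one should either restrict to $\|u\|,\|v\|$ small or invoke that on the full sphere $\bS^2_r$ the spherical law of cosines holds globally because $\arccos$ takes values in $[0,\pi]$. Everything after that is routine trigonometric and hyperbolic bookkeeping.
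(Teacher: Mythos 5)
Your proof is correct. Note that the paper offers no proof of this lemma at all (it is stated as a classical fact), so there is nothing to compare against; your argument --- reduction to the three simply connected model spaces via local isometry, the spherical and hyperbolic laws of cosines in the embedded models, and the half-argument identity $\tanh^2(d/2)=1-2/(1+\cosh d)$ to convert $r\operatorname{arccosh}(C)$ into the stated $2r\arctanh\sqrt{1-2/(1+C)}$ --- is the standard derivation and fills in exactly the steps the paper leaves implicit. Your closing caveat is also well placed: as stated the lemma is only valid for $u,v$ within the injectivity radius (a non--simply-connected quotient such as a flat torus would violate the formulas for large $u,v$), which is consistent with the local usage in \autoref{thm:curvature}.
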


In contrast to the Taylor approximations in \autoref{lem:taylor}, the constant-curvature approximations in \autoref{thm:curvature} behave well for large distances: they have asymptotically linear growth along geodesics emanating from the same point.
This is made precise in the following lemma.

\begin{lemma}
\label{lem:growth}
The distances in \autoref{lem:const_curv} have the following (sub)-linear growth rates:
\begin{align*}
\lim_{t\to\infty} \frac{\dist\big(\exp_x(tu),\exp_x(tv)\big)}{t}
=
\left\{\begin{aligned}
& 0, && k>0, 
\\
& \|u-v\|, && k=0, 
\\
&\|u\|+\|v\|, && k<0, \cos(\phi)\neq 1,
\\
&\big|\|u\|-\|v\|\big|,&& k<0, \cos(\phi)=1.
\end{aligned}\right.
\end{align*}
\end{lemma}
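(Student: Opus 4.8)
The plan is to handle the three curvature signs separately; only the hyperbolic case requires real work. For $k=0$ the distance in \autoref{lem:const_curv} equals $\|tu-tv\| = t\,\|u-v\|$ exactly, so the quotient is the constant $\|u-v\|$ and the limit is immediate. For $k>0$ the model space is a round sphere of radius $r$, hence has finite diameter $\pi r$; thus $\dist(\exp_x(tu),\exp_x(tv)) \le \pi r$ for all $t$, and dividing by $t$ forces the limit to be $0$.

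For $k<0$ I would write $a = \|u\|/r$, $b = \|v\|/r$ and abbreviate $D(t) = 1 + \cosh(ta)\cosh(tb) - \sinh(ta)\sinh(tb)\cos\phi$, so that the distance in \autoref{lem:const_curv} is $2r\arctanh\sqrt{1 - 2/D(t)}$. Using $\cosh A\cosh B = \tfrac12(\cosh(A+B)+\cosh(A-B))$ and $\sinh A\sinh B = \tfrac12(\cosh(A+B)-\cosh(A-B))$ one rewrites
\[
\cosh(ta)\cosh(tb) - \sinh(ta)\sinh(tb)\cos\phi = \frac{1-\cos\phi}{2}\cosh\!\big(t(a+b)\big) + \frac{1+\cos\phi}{2}\cosh\!\big(t(a-b)\big),
\]
which exhibits the dominant exponential of $D(t)$ and is the source of the case distinction in the statement.

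The second ingredient is the elementary asymptotic, valid whenever $D\to+\infty$: since $1-\sqrt{1-2/D} = \tfrac1D + O(D^{-2})$ and $1+\sqrt{1-2/D} \to 2$, the identity $\arctanh x = \tfrac12\log\tfrac{1+x}{1-x}$ gives $2r\arctanh\sqrt{1-2/D} = r\log D + O(1)$. Combining this with the previous display, dividing by $t$, and letting $t\to\infty$, the limit equals $r$ times the exponential growth rate of $D(t)$. If $\cos\phi\neq 1$ the coefficient $\tfrac{1-\cos\phi}{2}$ is positive, so $D(t)$ grows like $e^{t(a+b)}$ and the limit is $r(a+b) = \|u\|+\|v\|$. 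If $\cos\phi = 1$ the first term drops out, $D(t) = 1 + \cosh(t(a-b))$ grows like $e^{t|a-b|}$ when $a\neq b$, and the limit is $r|a-b| = \big|\,\|u\|-\|v\|\,\big|$; in the remaining degenerate case $a=b$ (i.e.\ $u=v$) the two points coincide, the distance is identically $0$, and $\big|\,\|u\|-\|v\|\,\big| = 0$ as well.

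The main obstacle is purely bookkeeping rather than conceptual: one must check that the error terms in the square-root and $\arctanh$ expansions are genuinely $O(1)$ as $t\to\infty$, so that they disappear upon division by $t$, and one must isolate the degenerate subcase $\cos\phi=1$, $\|u\|=\|v\|$, where $D(t)$ stays bounded and the logarithmic estimate does not apply. No geometric input beyond \autoref{lem:const_curv} is needed.
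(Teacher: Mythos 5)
Your proof is correct, and its overall architecture matches the paper's: the spherical and Euclidean cases are dispatched trivially, and the hyperbolic case reduces to identifying the exponential growth rate of the denominator $D(t)$, which both you and the paper expose by rewriting the combination $\cosh\cosh-\cos\phi\,\sinh\sinh$ so that the coefficient $1-\cos\phi$ multiplies the dominant term. The one genuine difference is the final analytic step: the paper applies l'H\^opital's rule to $2r\arctanh\sqrt{1-2/D(t)}$ (after first checking that the distance diverges) and then computes $\lim rD'(t)/D(t)$, whereas you use $\arctanh x=\tfrac12\log\tfrac{1+x}{1-x}$ together with $1-\sqrt{1-2/D}=\tfrac1D+O(D^{-2})$ to get the clean asymptotic $r\log D(t)+O(1)$, so the limit is read off directly as $r$ times the growth rate of $\log D(t)$. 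Your route is slightly more elementary and self-contained: it avoids the l'H\^opital proviso ``provided the limit on the right-hand side exists'' and the explicit differentiation of $D$, and it also treats the $\cos\phi=1$ subcase inside the same computation (where in fact $D(t)=1+\cosh(t(a-b))$ gives the distance \emph{exactly} as $rt|a-b|$), rather than dismissing it as the clear ``same geodesic'' case as the paper does. The only bookkeeping worth making explicit is the degenerate situation $u=v=0$ (where $a+b=0$ and $D$ is constant), but there the distance vanishes identically and all the stated formulas agree.
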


\begin{proof}
As the statement is clear for spherical and Euclidean distances, as well as for points moving along the same geodesic, the only non-trivial case is $k<0$, $u\neq 0$, $v\neq 0$, and $\cos(\phi)\neq 1$. 
Let $D(t)$ be the denominator in the expression for the hyperbolic distance, i.e., 
\begin{align*}
D(t)
&= 
1+\cosh\|tu/r\|\cosh\|tv/r\|-\sinh\|tu/r\|\sinh\|tv/r\|\cos\phi
\\&=
1+\cosh\big(\|tu/r\|-\|tv/r\|\big)+(1-\cos\phi)\sinh\|tu/r\|\sinh\|tv/r\|.
\end{align*}
The latter formula shows that $D(t)$ tends to infinity for large $t$. 
Accordingly, the distance $\dist\big(\exp_x(tu),\exp_x(tv)\big)$ tends to infinity for large $t$.
This justifies the use of l'H\^opital's rule, and one obtains
\begin{align*}
\hspace{2em}&\hspace{-2em}
\lim_{t\to\infty} \frac1t \dist\big(\exp_x(tu),\exp_x(tv)\big)
=
\lim_{t\to\infty} \frac{d}{dt} 2r \arctanh\sqrt{1-\frac{2}{D(t)}} 
=
\lim_{t\to\infty} \frac{r D'(t)}{D(t)},
\end{align*}
provided that the limit on the right-hand side exists. 
One easily calculates
\begin{equation*}
rD'(t)
=
(\|u\| - \|v\| \cos\phi) \cosh\|tv/r\| \sinh\|tu/r\| + (\|v\| - \|u\| \cos\phi) \cosh\|tu/r\| \sinh\|tv/r\|.
\end{equation*}
Approximating all $\cosh$ and $\sinh$ by $\exp$, one obtains
\begin{align*}
\lim_{t\to\infty} \frac{r D'(t)}{D(t)}
&=
\lim_{t\to\infty} \frac{(\|u\| + \|v\|)(1- \cos\phi) \exp(\|tu/r\|+\|tv/r\|)}{1+\exp(\|tu/r\|+\|tv/r\|)(1-\cos\phi)}
=
\|u\| + \|v\|.
\qedhere
\end{align*}
\end{proof}

As an alternative to \autoref{thm:curvature}, it would be tempting to use $m$-dimensional instead of two-dimensional model spaces. 
This would lead to approximate distances of the same order of accuracy with the additional benefit that the triangle inequality holds. 
However, it is difficult to find model spaces with generically prescribed sectional curvatures at a point.
Moreover, the geodesic distance on such model spaces, if they exist, might not have a closed-form expression. 
This points to the advantage of fitting model spaces separately for each pair of samples, as done in \autoref{thm:curvature}.

\section{Applications to shape analysis}
\label{sec:shape}

This section demonstrates the usefulness of distance approximations in the context of shape analysis. 
More specifically, we consider shape analysis on landmark spaces with kernel metrics. 
These metrics are widely used and provide an intuitive notion of similarity. 
As the calculation of the Riemannian distance on these spaces is computationally intensive, there is a high potential for significant speed-ups via distance approximations. 

\subsection{Landmark spaces with kernel metrics}
\label{sec:landmarks}

For any $d,m\in\bN_{>0}$, landmark space $\Land^m(\bR^d)$ is the set of all configurations of $m$ distinct points in $\bR^d$. 
Landmark space is an open subset of $\bR^{d\times m}$ and therefore a manifold. 
Any kernel $k\colon\bR^d\times\bR^d\to \bR^{d\times d}$ defines a Riemannian co-metric on landmark space via
\begin{align*}
K_q(p,p) = \sum_{i,j=1}^m (p^i)^\top k(q^i,q^j)p^j\in\bR, 
\qquad
q\in \Land^m(\bR^d), p \in T^*_q \Land^m(\bR^d).
\end{align*}
The corresponding Riemannian metric is denoted by $G$. 
If $k$ is the kernel of a reproducing Hilbert space of vector fields on $\bR^d$, then $k$ can be interpreted as a right-invariant co-metric on the diffeomorphism group $\Diff(\bR^d)$ such that for any landmark configuration $\bar q \in \Land^m(\bR^d)$, the following map is a Riemannian submersion:
\begin{equation*}
(\Diff(\bR^d),k) \ni \phi \mapsto \big(\phi(\bar q^1),\dots,\phi(\bar q^m)\big) \in (\Land^m(\bR^d),K).
\end{equation*}
In this sense the co-metric on landmark space is induced by a co-metric on the diffeomorphism group on ambient space. 
A typical choice of kernel is the Gaussian kernel, which is given by 
\begin{align*}
\forall x,y\in\bR^d: \qquad k(x,y)=\exp\left(\frac{\|x-y\|^2}{2\sigma^2}\right)I^{d\times d},
\end{align*}
where $\sigma\in(0,\infty)$ and $I^{d\times d}$ denotes the $d\times d$ identity matrix.

\subsection{Computation of sectional curvatures}

The sectional curvatures on general Riemannian manifolds can be computed numerically by taking advantage of the automatic differentiation features of modern machine learning software, as demonstrated by \textcite{kuehnel2017computational}.
This is also the approach we follow in our numerics. 
Alternatively, in the specific case of landmark manifolds with kernel metrics, the sectional curvatures could also be computed directly thanks to Mario's formula, which provides closed-form expressions via the relation to the sectional curvatures on $\Diff(\bR^d)$, as described by \textcite{micheli2012sectional}.
If the kernel metric on $\Diff(\bR^d)$ was bi-invariant, which it is not, then the Riemannian exponential would coincide with the Lie group exponential, and the sectional curvatures could be computed using the Baker--Campbell--Hausdorff formula as suggested by \textcite{yang2011approximations}.

The computational complexity of the curvature computations is as follows. 
Consider a dataset of $n$ landmark configurations $\{q_1,\dots,q_n\}$ in $\Land^m(\bR^d)$, where the dimension $d$ is treated as a constant.
Then the initial registration to some fixed template configuration $\bar q$, i.e., the computation of $v_i=\log_{\bar q}(q_i)$ for each $i \in \{1,\dots,n\}$, has complexity $O(m^2n)$. 
Most importantly, only a linear number $n$ of boundary value problems has to be solved. 
Subsequently, the approximate pairwise distances can be computed at complexity $O(m^4n^2)$ from the full Riemannian curvature tensor or at complexity $O(m^2n^2)$ using Mario's formula . 
While this quadratic-in-$n$ complexity is unavoidable when fully sampled distance matrices are required, the constants in the complexity bound can be very good.
For example, if $m$ is sufficiently small to allow the computation of the full Riemannian curvature tensor at $\bar q$, then each sectional curvature can be computed by a few matrix-times-vector operations, and the corresponding approximate distances \eqref{eq:curvature} by some additional trigonometric function evaluations.

\subsection{Numerical experiments}

We tested our distance approximation on some toy datasets of landmark configurations. 
More extensive tests on larger and higher-dimensional real-world datasets are intended in future work. 

\begin{figure}%
\begin{subfigure}[t]{0.5\textwidth-1em}%
\includegraphics[width=\textwidth]{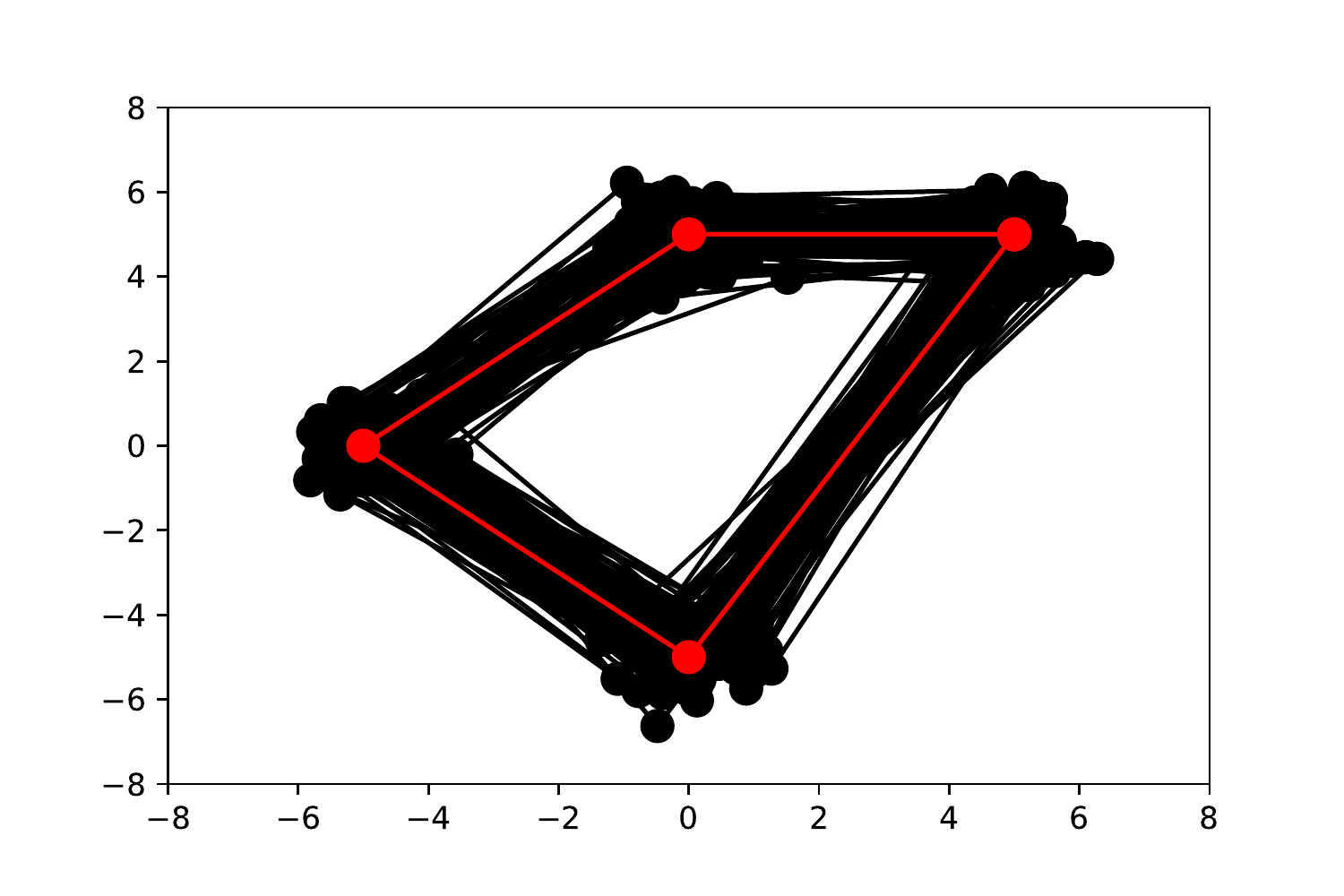}
\caption{Random shapes $q_i=\exp_{\bar q}(v_i)$, where $v_i$ are independent $N(0,G_{\bar q}/2)$-distributed random vectors.}%
\label{fig:trapezoid_a}
\end{subfigure}%
\hspace{2em}%
\begin{subfigure}[t]{0.5\textwidth-1em}%
\includegraphics[width=\textwidth]{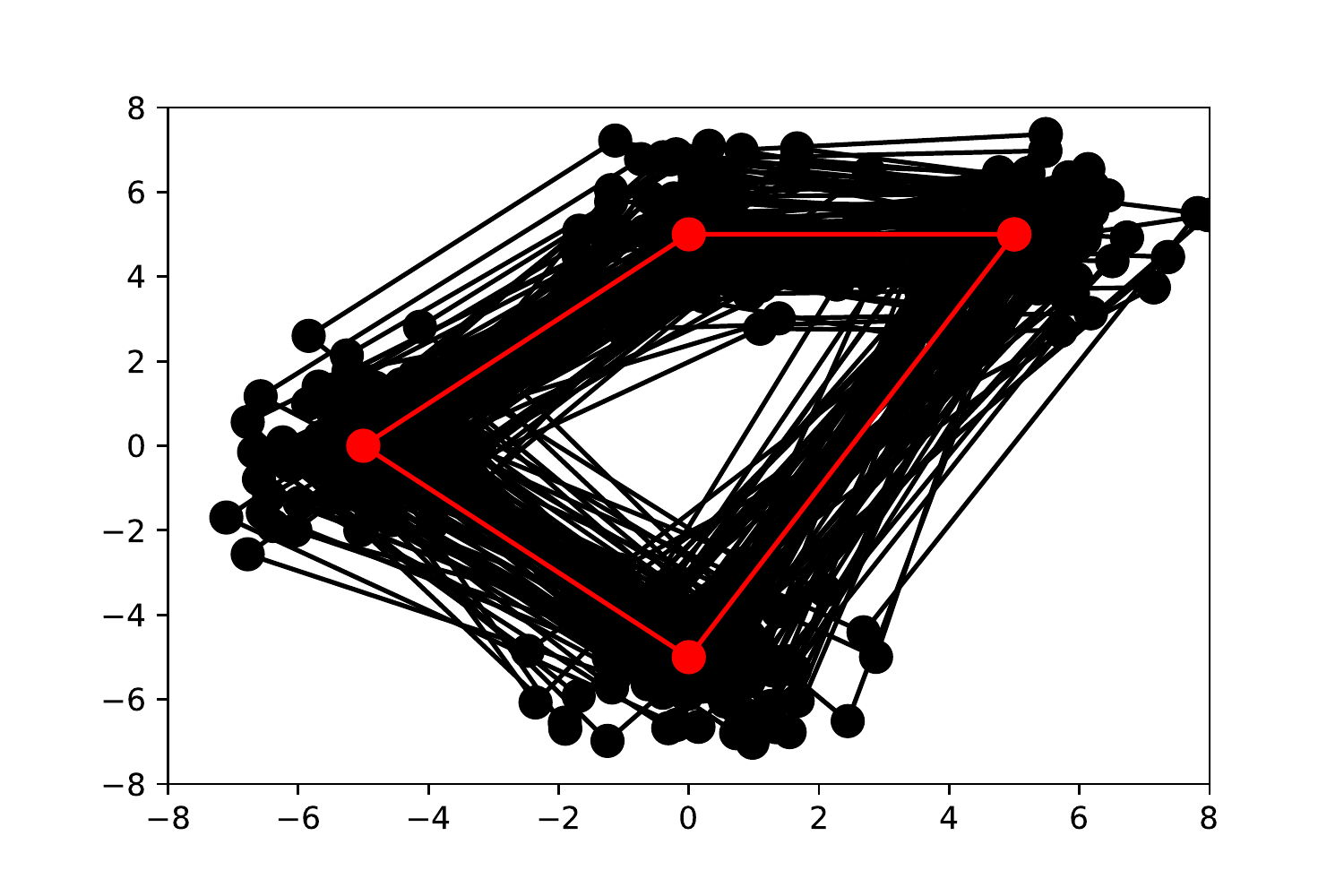}
\caption{Random shapes $q_i=\exp_{\bar q}(v_i)$, where $v_i$ are independent $N(0,G_{\bar q})$-distributed random vectors.}%
\label{fig:trapezoid_b}%
\end{subfigure}%
\\%
\begin{subfigure}[t]{0.5\textwidth-1em}%
\includegraphics[width=\textwidth]{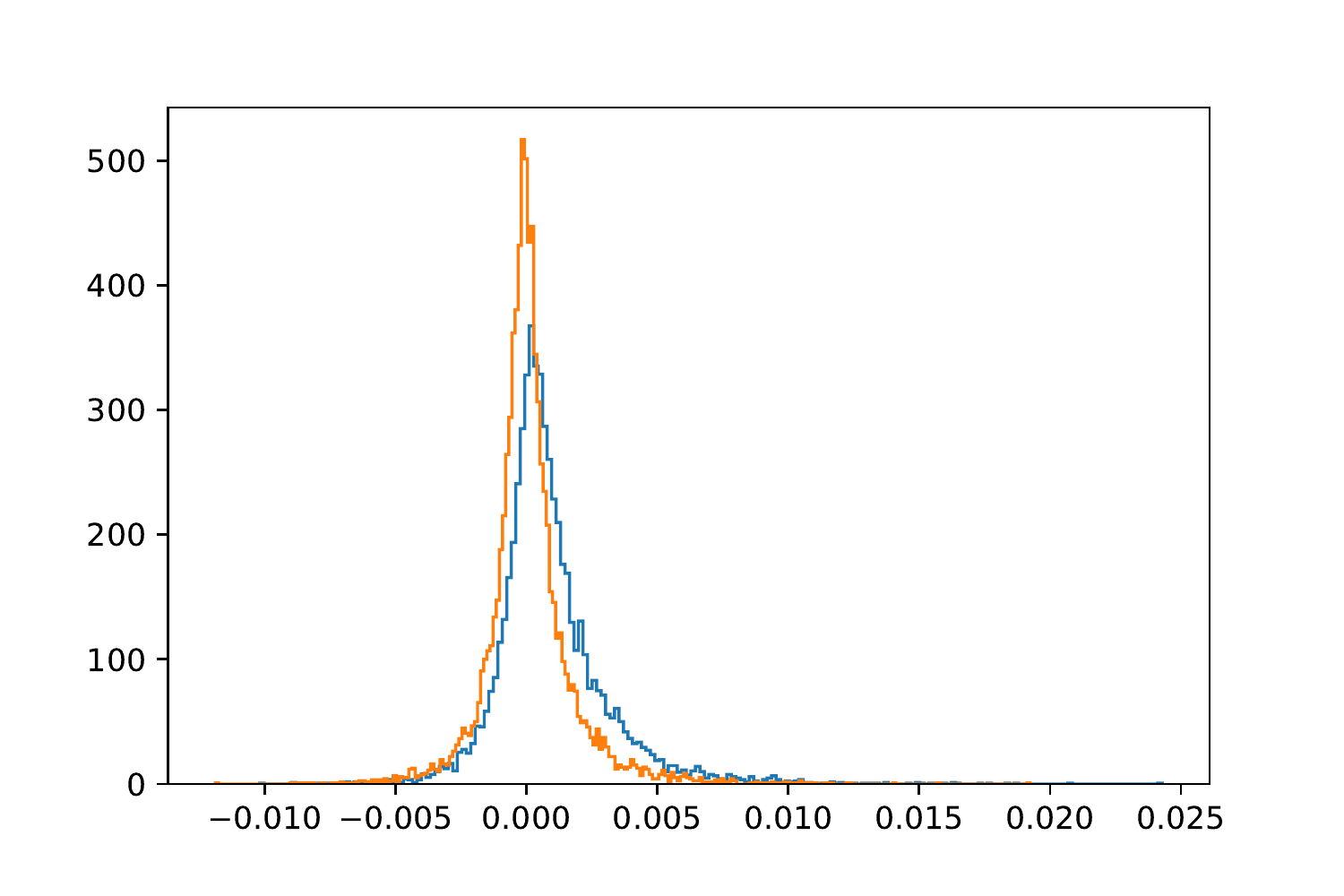}
\caption{Histogram of true minus approximate distances for the shapes in \subref{fig:trapezoid_a}.}%
\label{fig:histograms3}%
\end{subfigure}%
\hspace{2em}%
\begin{subfigure}[t]{0.5\textwidth-1em}%
\includegraphics[width=\textwidth]{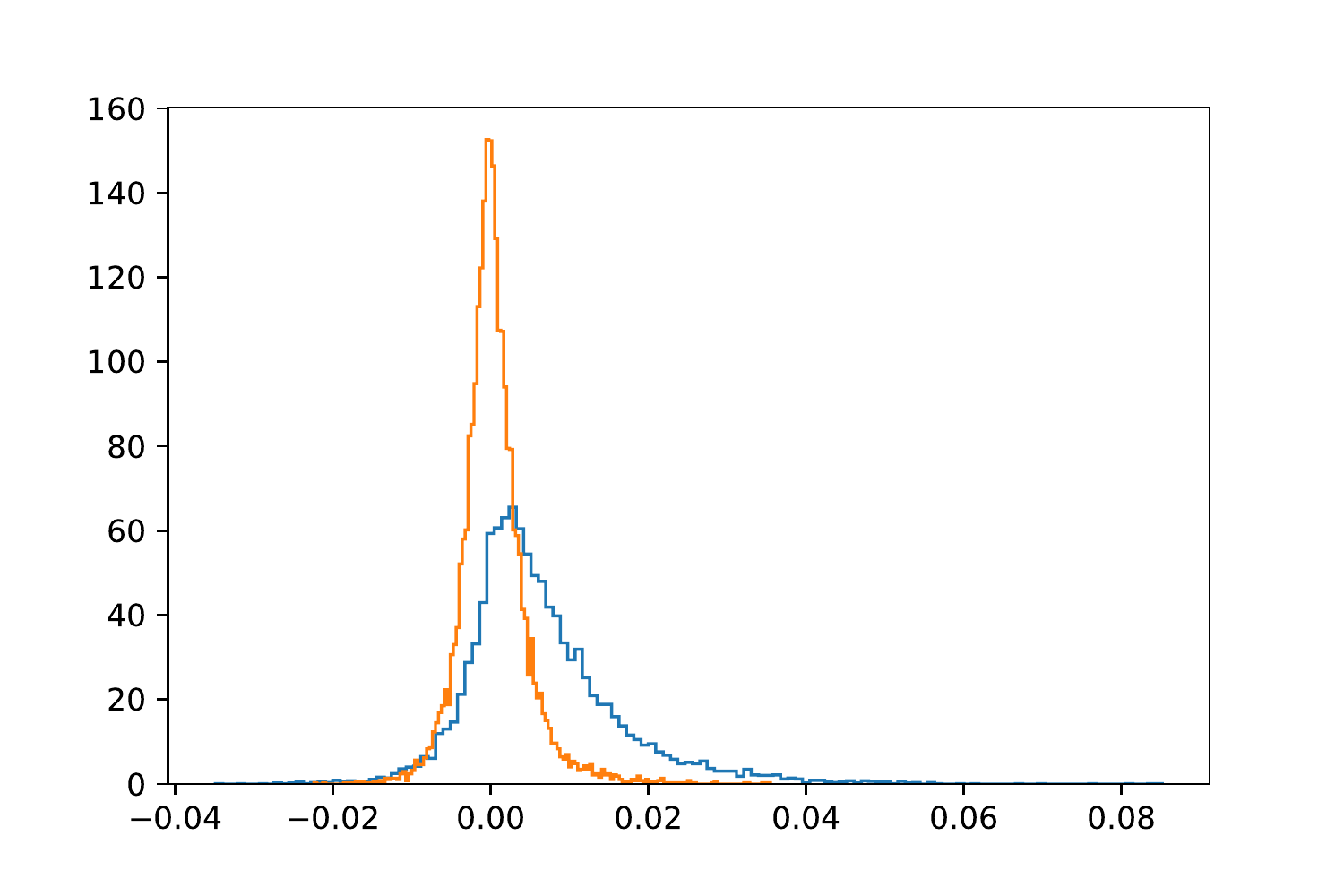}
\caption{Histogram of true minus approximate distances for the shapes in \subref{fig:trapezoid_b}.}%
\label{fig:histograms4}%
\end{subfigure}%
\\%
\begin{subfigure}[t]{0.5\textwidth-1em}%
\includegraphics[width=\textwidth]{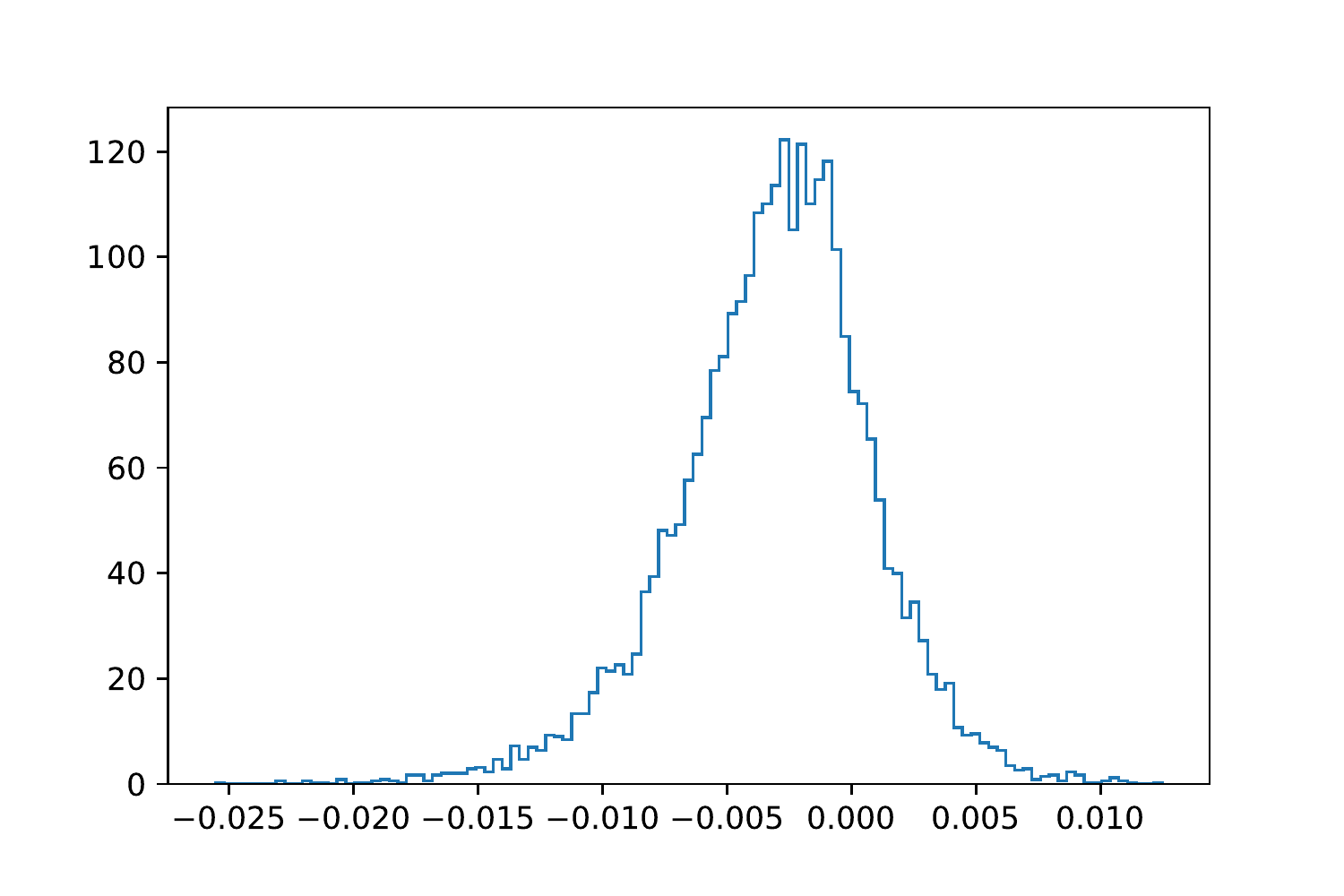}
\caption{Histogram of the sectional curvatures $k_{\bar q}(v_i,v_j)$ for the shapes in \subref{fig:trapezoid_a} and \subref{fig:trapezoid_b}.}%
\label{fig:trapezoid_e}
\end{subfigure}%
\hspace{2em}%
\begin{subfigure}[t]{0.5\textwidth-1em}%
\includegraphics[width=\textwidth]{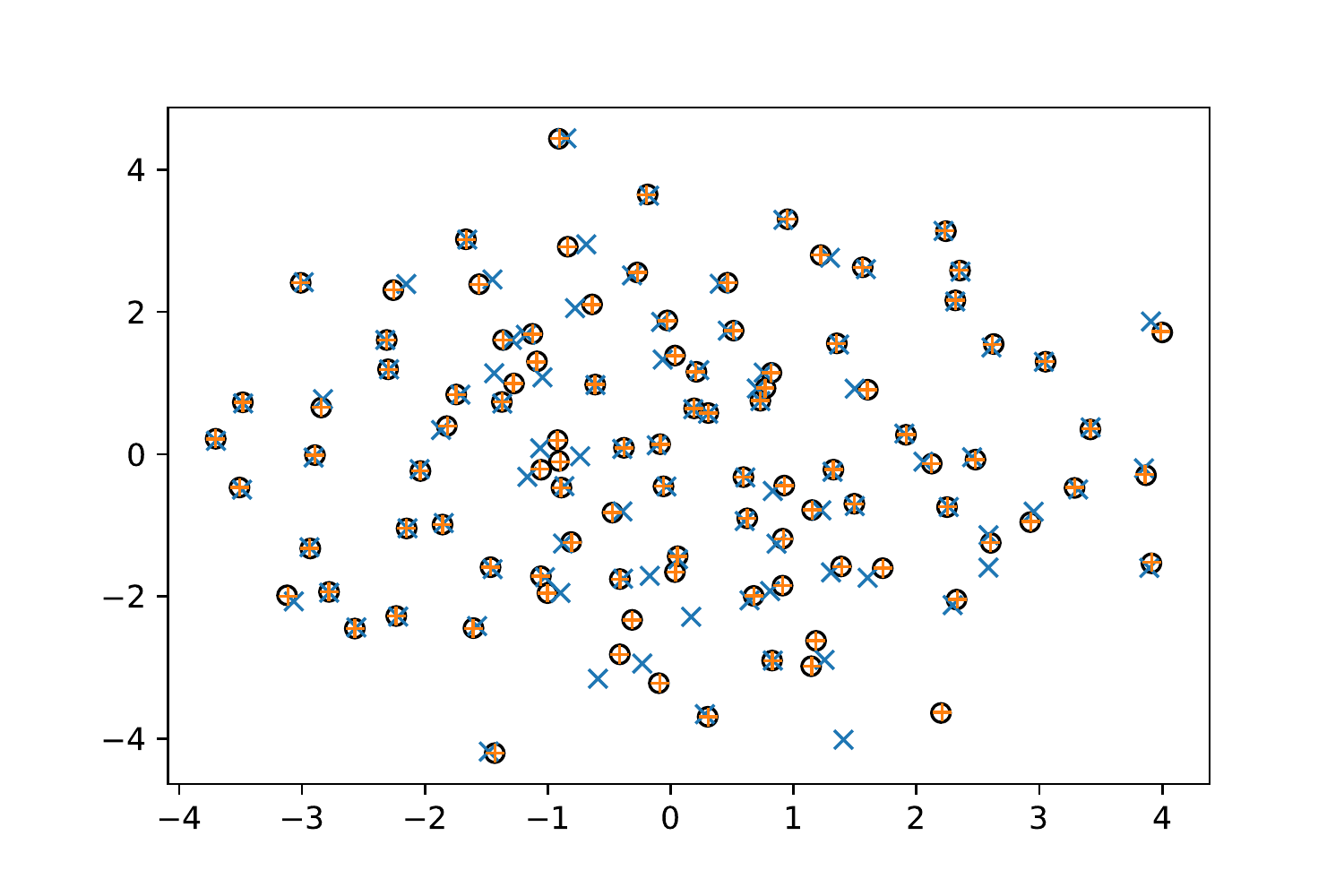}
\caption{Multi-dimensional scaling  based on true ($\circ$) and approximate ($\textcolor[HTML]{1f77b4}{\times}$, $\textcolor[HTML]{ff7f0e}{+}$) distances from \subref{fig:trapezoid_b}.}%
\label{fig:trapezoid_f}%
\end{subfigure}%
\caption{Improved accuracy of second-order (orange $\textcolor[HTML]{ff7f0e}{\blacksquare}$) over first-order (blue~$\textcolor[HTML]{1f77b4}{\blacksquare}$) distance approximations.}
\label{fig:trapezoid}
\end{figure}

A first observation of our numerical experiments is that the second-order approximation via constant-curvature spaces outperforms the first-order one, as shown in \autoref{fig:trapezoid} for standard normally distributed landmark data.
In this example the first-order approximations have a negative bias because the sectional curvatures are on average negative, but this bias is corrected by the second-order approximation. 
This confirms our theoretical predictions and stands in contrast to the results of \cite{yang2011approximations}, where an alternative second-order term did not lead to improved accuracy. 
Our findings are robust with respect to the variance of the noise and the width of the kernel: in all cases considered, the second-order approximation errors were smaller in mean and variance than the first-order ones.

A second observation is that even for moderately large datasets of approximately 15--20 shapes, the distance approximations are faster to compute than the true distances, despite the overhead of the curvature computations. 
This holds true despite the fact that our numerics do not yet rely on the explicit curvature formulas of \textcite{micheli2012sectional}, which provide potential for further improvement. 

A third observation is that the second-order correction can be highly beneficial to distance-based learning.
Indeed, \autoref{fig:trapezoid_f} shows that multi-dimensional scaling based on true distances is nearly identical to multi-dimensional scaling based on second-order approximations, whereas the result based on first-order approximations is significantly off. 

\printbibliography

\end{document}